\newcommand{\dataversione}{\today}
\theoremstyle{plain} 
\theoremstyle{definition}
\numberwithin{equation}{section}
\newtheoremstyle{mytheorem}
{}
{}
{\it}
{\parindent}
{\bf}
{.}
{ }
{\thmnumber{#2.~}\thmname{#1}\thmnote{~\rm#3}}
\newtheoremstyle{myremark}
{}
{}
{\rm}
{\parindent}
{\bf}
{.}
{ }
{\thmnumber{#2.~}\thmname{#1}\thmnote{~\rm#3}}
\newtheoremstyle{myparagraph}
{}
{}
{\rm}
{\parindent}
{\bf}
{.}
{ }
{\thmnumber{#2.~}\thmname{#1}\thmnote{#3}}
\theoremstyle{mytheorem}
\newtheorem{theorem}[subsection]{Theorem}
\newtheorem{lemma}[subsection]{Lemma}
\newtheorem{corollary}[subsection]{Corollary}
\newtheorem{proposition}[subsection]{Proposition}
\theoremstyle{myremark}
\newtheorem{remark}[subsection]{Remark}
\theoremstyle{myparagraph}
\newtheorem*{parag*}{}
\def\@secnumfont{\sc}
\def\section{\@startsection{section}{1}%
\z@{1.5\linespacing\@plus .2\linespacing}{.7\linespacing}%
{\normalfont\sc\centering}}
\def\ps@headings{\ps@empty
 \def\@evenhead{%
  \setTrue{runhead}%
  \normalfont\footnotesize
  \rlap{\thepage}\hfil
  \def\thanks{\protect\thanks@warning}%
  \leftmark{}{}\hfil}%
 \def\@oddhead{%
  \setTrue{runhead}%
  \normalfont\footnotesize\hfil
  \def\thanks{\protect\thanks@warning}%
  \rightmark{}{}\hfil \llap{\thepage}}%
\let\@mkboth\markboth}
\renewenvironment{proof}[1][\proofname]{\par
  \pushQED{\qed}%
  \normalfont \topsep6\p@\@plus6\p@\relax
  \trivlist
  \itemindent\normalparindent
  \item[\hskip\labelsep
    \bfseries
    #1\@addpunct{.}]\ignorespaces
}{%
  \popQED\endtrivlist\@endpefalse
}
\providecommand{\proofname}{Proof}
\newcommand{\R}{\mathbb{R}}
\newcommand{\N}{\mathbb{N}}
\newcommand{\Haus}{\mathscr{H}}
\newcommand{\Leb}{\mathscr{L}}
\newcommand{\Tan}{\mathrm{Tan}}
\newcommand{\wrt}{w.r.t.\ }
\newcommand{\dV}{d_V\kern-1pt}
\newcommand{\trait}[3]{\vrule width #1ex height #2ex depth #3ex}
\newcommand{\trace}{\mathchoice%
  {\mathbin{\trait{.12}{1.2}{.03}\trait{.8}{0.09}{0.03}}}
  {\mathbin{\trait{.12}{1.2}{.03}\trait{.8}{0.09}{0.03}}}
  {\mathbin{\hskip.15ex\trait{.09}{.84}{0.02}\trait{.56}{.07}{.02}}\hskip.15ex}
  {\mathbin{\trait{.07}{.6}{.01}\trait{.4}{.06}{.01}}}}
\newenvironment{itemizeb}
{\begin{itemize}\itemsep=2pt}{\end{itemize}}
\begin{document}

	%
\pagestyle{empty}
\pagestyle{myheadings}
\markboth%
{\underline{\centerline{\hfill\footnotesize%
\textsc{Andrea Marchese}%
\vphantom{,}\hfill}}}%
{\underline{\centerline{\hfill\footnotesize%
\textsc{Covering singular measures}%
\vphantom{,}\hfill}}}

	%
\thispagestyle{empty}

~\vskip -1.1 cm

	%
{\footnotesize\noindent
[version:~\dataversione]%
\hfill
}

\vspace{1.7 cm}

	%
{\Large\sl\centering
A covering theorem for singular measures\\ in the Euclidean space
\\
}
\vspace{.6 cm}

	%
\centerline{\sc Andrea Marchese}

\vspace{.8 cm}

{\rightskip 1 cm
\leftskip 1 cm
\parindent 0 pt
\footnotesize

	%
{\sc Abstract.} We prove that for any singular measure $\mu$ on $\mathbb{R}^n$ it is possible to cover $\mu$-almost every point with $n$ families of Lipschitz slabs of arbitrarily small total width. More precisely, up to a rotation, for every $\delta>0$ there are $n$ countable families of $1$-Lipschitz functions $\{f_i^1\}_{i\in\mathbb{N}},\ldots, \{f_i^n\}_{i\in\mathbb{N}},$
$f_i^j:\{x_j=0\}\subset\mathbb{R}^n\to\mathbb{R}$, and $n$ sequences of positive real numbers 
$\{\varepsilon_i^1\}_{i\in\mathbb{N}},\ldots, \{\varepsilon_i^n\}_{i\in\mathbb{N}}$
such that, denoting $\hat x_j$ the orthogonal projection of the point $x$ onto $\{x_j=0\}$ and
$$I_i^j:=\{x=(x_1,\ldots,x_n)\in \mathbb{R}^n:f_i^j(\hat x_j)-\varepsilon_i^j< x_j< f_i^j(\hat x_j)+\varepsilon_i^j\},$$
it holds $\sum_{i,j}\varepsilon_i^j\leq \delta$ and $\mu(\mathbb{R}^n\setminus\bigcup_{i,j}I_i^j)=0.$

We apply this result to show that, if $\mu$ is not absolutely continuous, it is possible to approximate the identity with a sequence $g_h$ of smooth equi-Lipschitz maps satisfying 
$$\limsup_{h\to\infty}\int_{\mathbb{R}^n}{\rm{det}}(\nabla g_h) d\mu<\mu(\mathbb{R}^n).$$
From this, we deduce a simple proof of the fact that every top-dimensional Ambrosio-Kirchheim metric current in $\R^n$ is a Federer-Fleming flat chain.

\par
\medskip\noindent
{\sc Keywords: } Radon measure, Lipschitz function, metric current.

\par
\medskip\noindent
{\sc MSC (2010):} 26A16, 28C05.
\par
}

%
%

\section{Introduction}
Fix an orthonormal basis $(e_1,\ldots,e_n)$ of $\R^n$. For $j=1,\ldots,n$, and for $x\in\R^n$, we denote $\hat x_j\in\R^{n-1}$ the orthogonal projection of $x$ onto $\{x_j=0\}$. Given a function $f:\R^{n-1}\to \R$ and $\varepsilon>0$ we consider the set 
$$I_\varepsilon^j(f):=\{x=(x_1,\ldots,x_n)\in \R^n:f(\hat x_j)-\varepsilon< x_j< f(\hat x_j)+\varepsilon\}$$
and we call it the \emph{open slab} around $f$, of \emph{width} $\varepsilon$, in direction $e_j$.

Given a family $\mathcal{F}$ of slabs, we denote $w(\mathcal{F})$ its \emph{total width}, i.e. the sum of the widths of the corresponding slabs.
For a fixed sequence $\{(f_i^j,\varepsilon_i^j)\}_{(i,j)\in\N\times\{0,\ldots,n\}}$ with $f_i^j:\R^{n-1}\to\R^n$ and $\varepsilon_i^j$ positive real numbers, we use the short notation $I_i^j$ to denote the slab $I_{\varepsilon_i}^j(f_i^j)$.

Given a measure $\mu$ on $\R^n$ and a Borel function $\rho:\R^n\to\R^n$ we denote by $\rho_\sharp\mu$ the push forward of $\mu$ via $\rho$, i.e. the measure defined by
$$\rho_\sharp\mu(A):=\mu(\rho^{-1}(A)),$$
for every Borel set $A$. 

The main result of this note is the following theorem. 
\begin{theorem}\label{main}
Let $\mu$ be a finite Borel measure on $\R^n$, $n\geq 2$, which is singular with respect to the Lebesgue measure. Then there exists a rotation $\rho:\R^n\to\R^n$ with the following property. For every $\delta>0$ there is a sequence $\{(f_i^j,\varepsilon_i^j)\}_{(i,j)\in\N\times\{0,\ldots,n\}}$ where $f_i^j:\R^{n-1}\to\R$ are $1$-Lipschitz functions and $\varepsilon_i^j$ are positive real numbers such that the family of slabs $\mathcal{F}:=\{I_i^j\}_{(i,j)\in\N\times\{0,\ldots,n\}}$ has total width $w(\mathcal{F})\leq\delta$ and
$$\rho_\sharp\mu\left(\R^n\setminus\bigcup_{i,j}I_i^j\right)=0.$$
\end{theorem}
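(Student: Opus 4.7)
The plan is to reduce the statement to a structure theorem for Lebesgue null sets in $\R^n$ and then inflate the resulting Lipschitz graphs into arbitrarily thin slabs. Since $\mu$ is singular with respect to $\Leb^n$, one fixes once and for all a Borel set $E\subset\R^n$ with $\mu(\R^n\setminus E)=0$ and $\Leb^n(E)=0$: all the mass of $\mu$ is carried by the Lebesgue null set $E$, and the set $E$ depends only on $\mu$, not on the parameter $\delta$.

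The core geometric step is then to cover $E$ by countably many Lipschitz graphs in $n$ coordinate directions. For this I would invoke the higher-dimensional analogue of the Alberti--Cs\"ornyei--Preiss structure theorem for planar null sets (with the Cs\"ornyei--Jones extension, or the geometric consequences of the Alberti--Marchese decomposability bundle, providing the case $n\geq 3$): there exists a rotation $\rho$ of $\R^n$ and, for each $j\in\{1,\ldots,n\}$, a countable family of $1$-Lipschitz functions $f_i^j:\{x_j=0\}\to\R$ such that
$$\rho(E)\subset\bigcup_{i\in\N,\;j=1,\ldots,n}\Gr(f_i^j).$$
Crucially, the rotation $\rho$ and the functions $f_i^j$ depend only on $\mu$ (through $E$) and not on $\delta$; once produced, they serve as a universal covering of the concentration set of $\rho_\sharp\mu$.

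Given $\delta>0$ one then picks positive reals $\varepsilon_i^j$ with $\sum_{i,j}\varepsilon_i^j\leq\delta$, for instance $\varepsilon_i^j:=\delta\,2^{-i-1}/n$. Each graph $\Gr(f_i^j)$ is contained in the associated slab $I_i^j=I_{\varepsilon_i^j}^j(f_i^j)$, so the union of slabs covers $\rho(E)$, and therefore
$$\rho_\sharp\mu\Big(\R^n\setminus\bigcup_{i,j}I_i^j\Big)\le\rho_\sharp\mu\big(\R^n\setminus\rho(E)\big)=\mu(\R^n\setminus E)=0,$$
while $w(\mathcal{F})=\sum_{i,j}\varepsilon_i^j\leq\delta$ by construction.

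The whole difficulty is concentrated in the invocation of the structure theorem in the second paragraph: one needs a \emph{single} rotation that simultaneously aligns all $n$ families to the coordinate axes, and the Lipschitz constants must be brought down to $1$. In dimension $n=2$ this is essentially the content of Alberti--Cs\"ornyei--Preiss, where in fact the Lipschitz constants can be made arbitrarily small at the cost of using more graphs; in higher dimensions one needs the more recent extensions. The extraction of $E$ in the first step and the inflation argument in the third step are, by contrast, entirely routine and purely measure-theoretic.
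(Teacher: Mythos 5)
Your proposal has a genuine gap at its core step: the ``structure theorem'' you invoke in the second paragraph, namely that one can cover the singular set by a fixed countable family of $1$-Lipschitz graphs of \emph{zero width} (after a rotation), is simply false, and the difference between zero-width graphs and slabs of small positive total width is precisely the mathematical content of Theorem~\ref{main}. To see the failure, take $n=2$ and let $\mu = \Haus^1\trace K$ where $K$ is the four-corner Cantor set, a compact Lebesgue null set with $0<\Haus^1(K)<\infty$ which is purely $1$-unrectifiable, hence so is $\rho(K)$ for any rotation $\rho$. Every Lipschitz graph is a $1$-rectifiable set, so $\Haus^1\bigl(\rho(K)\cap \mathrm{graph}(f_i^j)\bigr)=0$ for every $i,j$; a countable union of such graphs therefore misses $\rho(K)$ up to an $\Haus^1$-null set and cannot contain it. So there is no universal, $\delta$-independent covering by graphs, and your claim that ``the rotation $\rho$ and the functions $f_i^j$ depend only on $\mu$ and not on $\delta$'' cannot hold; in the actual theorem only the rotation is fixed, while the slab families must be regenerated for each $\delta$.

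The planar Alberti--Cs\"ornyei--Preiss result you cite also does not say what you attribute to it: it is a statement about covering null sets by slabs (or ``tubes'' around Lipschitz graphs) of arbitrarily small total width, i.e.\ a statement about cone-null decompositions, not about coverings by genuine graphs. By collapsing slabs to graphs your first two steps absorb all the difficulty into an assumption that is stronger than the conclusion and false. The paper's proof instead (a) uses the decomposability bundle and \cite[Cor.~1.12]{DPR}, \cite[Cor.~6.5]{AM} to show $V(\mu,x)\neq\R^n$ a.e., (b) proves a genericity lemma (Lemma~\ref{lemma}) to choose one rotation making $V(\mu,\cdot)$ transversal to the cones $C_j$, so that $\mu$ splits into pieces $\mu\trace E_j$ each supported on a $C_j$-null set, and (c) proves the quantitative covering of a compact $C_j$-null set by slabs of small total width via Mirsky's dual of Dilworth's theorem (Proposition~\ref{coveringtheorem}); it is only in step (c) that $\delta$ enters, and the output is a fresh finite family of slabs for each $\delta$, not a single family of graphs. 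Your routine ``inflation'' step is fine as far as it goes, but it starts from a premise that does not hold.
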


\begin{remark}
\begin{itemize}

\item[(i)] For $n=2$, Theorem \ref{main} is a straightforward consequence of a covering result for nullsets, which will appear in \cite{ACP3}. Actually a weaker version  of such covering result, proved in \cite{ACP1} and \cite{ACP2} (i.e. for compact nullsets), would also suffice to our purpose.
\item[(ii)] For $n>2$, the theorem follows from a stronger result, announced by M. Cs\"ornyei and P. Jones (see \cite{CJ}). The proof we present here is considerably simpler. We remark that all the ``ingredients'' for the proof were already available in the literature, indeed the proof is achieved combining a corollary of the main result in \cite{DPR} with some results obtained in \cite{AM} and some important ideas from \cite{ACP3}, also used in \cite{Ma}.
\item[(iii)] In Lemma \ref{lemma}, we prove that the set of rotations $\rho$ for which the conclusion of Theorem \ref{main} holds, has full measure in $SO(n)$. In particular, one can chose a rotation which is arbitrarily close to the identity map, and then reparametrize the graphs of the Lipschitz functions $f_i^j$ with respect to the tilted coordinates. Hence one can get rid of the rotation $\rho$ in the statement, at the price of increasing the Lipschitz constant of an arbitrarily small quantity. 
\item[(iv)] This result can be used (see \cite[Chapter 4]{Ma}) to prove the weak converse of Rademacher's theorem, namely that for every singular measure $\mu$ on $\R^n$ there exist a Lipschitz function $f:\R^n\to\R$ which is $\mu$-a.e. non-differentiable. This was later 
improved in \cite{MS}, where it is proved that it is possible to find a Lipschitz function which admits any pointwise prescribed blowup, provided it is linear along the decomposability bundle of $\mu$ (see \S \ref{s:decomp}), at every point except for a set of arbitrarily small measure $\mu$. The converse of Rademacher's theorem has also important consequences in the study of Lipschitz differentiability metric measure spaces and of spaces with Ricci curvature bounded from below (see e.g. \cite{B,KM,DPMR,GP}).
\end{itemize}
\end{remark}

In \S\ref{currents}, we apply Theorem \ref{main} to obtain a simple proof of the case $k=n$ of the ``flat chain conjecture'' stated in \cite[Section 11]{AK}. Namely we prove that for any Ambrosio-Kirchheim metric current $T$ of dimension $n$ in $\R^n$, the measure $\|T\|$ is absolutely continuous (see Theorem \ref{metric_currents}). 

This result has been proved in \cite[Theorem 1.15]{DPR}, relying on results from \cite{S}. Our proof is a direct consequence of the following theorem, of independent interest, which we obtain as a corollary of Theorem \ref{main}. 

\begin{theorem}\label{preiss}
Let $\mu$ be a finite Borel measure on $\R^n$ and assume that $\mu$ is not absolutely continuous with respect to Lebesgue. Then there exists a sequence of continuously differentiable, equi-Lipschitz functions $\{g_h\}_{h\in\N}$ converging pointwise to the identity and such that 
$$\limsup_{h\to\infty}\int_{\R^n}{\rm{det}}(\nabla g_h) d\mu <\mu(\R^n).$$
\end{theorem}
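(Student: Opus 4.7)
\emph{Plan.} Write $\mu = \mu_a + \mu_s$; by hypothesis $c := \mu_s(\R^n) > 0$. For each $h \in \N$, apply Theorem~\ref{main} to $\mu_s$ with $\delta = 1/h$ (absorbing the rotation by conjugation), obtaining $1$-Lipschitz slabs $\{I_i^j(h) = \{|x_j - f_i^j(\hat x_j)| < \varepsilon_i^j\}\}_{i,j}$ of total width $\leq 1/h$ covering $\mu_s$-a.e.\ point. A Vitali-type refinement within each direction $j$ allows us to replace these by pairwise disjoint slabs (still in direction $j$) covering a comparable $\mu_s$-mass, at the cost of a universal multiplicative constant in the width bound.

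\emph{Construction of $g_h$.} For each slab, pick a $C^1$ cutoff $\varphi_i^j\colon\R\to\R$ supported in $[-\varepsilon_i^j, \varepsilon_i^j]$, equal to the identity on the core $[-\varepsilon_i^j/2, \varepsilon_i^j/2]$, with $|\varphi_i^{j\prime}| \leq 1$ elsewhere. The single-slab map
\[
h_i^j(x) := x - \varphi_i^j(x_j - f_i^j(\hat x_j))\, e_j
\]
coincides with the identity outside $I_i^j(h)$, maps $I_i^j(h)$ into itself, is $K(n)$-Lipschitz, and (by the matrix determinant lemma applied to $I - \varphi_i^{j\prime}(u)\,e_j\otimes(e_j - \nabla^{(j)}f_i^j)$ with $u = x_j - f_i^j(\hat x_j)$) satisfies $\det\nabla h_i^j = 1 - \varphi_i^{j\prime}(u)$, which vanishes on the core. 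Because slabs in direction $j$ are disjoint and each map fixes its slab, the composition $h^j$ of all $h_i^j$ in direction $j$ is well-defined and still $K(n)$-Lipschitz, independently of the number of slabs. Setting $g_h := h^1 \circ h^2 \circ \cdots \circ h^n$ yields a $C^1$ equi-Lipschitz map (constant $K(n)^n$) with $|g_h - \mathrm{id}| \leq n/(2h) \to 0$, so $g_h \to \mathrm{id}$ uniformly.

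\emph{Estimate and main obstacle.} For $\mu_a$: since $\{g_h \neq \mathrm{id}\} \subset \bigcup_{i,j} I_i^j(h)$, of Lebesgue measure $\lesssim_R 1/h$ in each ball $B_R$, absolute continuity and dominated convergence give $\int\det\nabla g_h\,d\mu_a \to \mu_a(\R^n)$. For $\mu_s$: after centering each slab on the $\mu_s$-median of $x_j - f_i^j(\hat x_j)$ (which preserves the $1$-Lipschitz property of $f_i^j$), at least half the $\mu_s$-mass of each slab sits in its core. For $x$ in the core of a ``thick'' slab (half-width $\geq 2/h$), the outer maps $h^k$ ($k > j_0$) displace $\hat x_{j_0}$ by at most $1/h$ and therefore keep the image inside the core of $I_{i_0}^{j_0}(h)$; the chain-rule factor $\det\nabla h_{i_0}^{j_0}$ then vanishes, forcing $\det\nabla g_h(x) = 0$. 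After an iterated dyadic-scale refinement of Theorem~\ref{main}, the ``thin'' slabs carry negligible $\mu_s$-mass, giving $\int\det\nabla g_h\,d\mu_s \leq (1-c_0)c + o_h(1)$ for some $c_0 > 0$ uniform in $h$. Combining, $\limsup_h \int\det\nabla g_h\,d\mu \leq \mu_a(\R^n) + (1-c_0)c < \mu(\R^n)$. \textbf{The main obstacle} is the Vitali-type disjointification of Lipschitz slabs within each direction (to prevent Lipschitz constants from amplifying with the number of slabs) and the bookkeeping ensuring that inter-direction compositions do not push $\mu_s$-mass from slab cores into their buffers, where $\det\nabla h_i^j > 1$ is unavoidable for a compactly supported bump with core slope $1$.
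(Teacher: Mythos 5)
There is a genuine gap in the estimate for the singular part, and it traces to several concrete errors in your construction. First, a cutoff $\varphi$ that is \emph{compactly supported} in $[-\varepsilon,\varepsilon]$, \emph{equal to the identity} on $[-\varepsilon/2,\varepsilon/2]$, and satisfies $|\varphi'|\leq1$ on the transition cannot exist: you need $\varphi(\varepsilon/2)=\varepsilon/2$ and $\varphi(\varepsilon)=0$ over an interval of length $\varepsilon/2$, so $\varphi'$ averages $-1$ there while $\varphi'(\varepsilon/2)=1$ by $C^1$-matching, forcing $\varphi'<-1$ somewhere. Hence $\det\nabla h_i^j=1-\varphi'(u)>2$ on part of the transition zone, and $g_h$ can even \emph{increase} $\int\det$ over a slab. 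Your proposed fixes do not repair this: centering on the $\mu_s$-median does \emph{not} put half the mass in the core (the median controls which side of a point the mass sits on, not how concentrated it is near that point; a slab with all its $\mu_s$-mass at $u=\pm\varepsilon$ has median zero and zero core mass). The ``thick slab'' criterion ``half-width $\geq 2/h$'' is vacuous since the total width is $\leq 1/h$, so \emph{every} slab is ``thin,'' and the claim that thin slabs carry negligible mass would then say all the singular mass is negligible. Finally, even granting a core of comparable width, the displacement $|g_h-\mathrm{id}|=O(1/h)$ is of the \emph{same order} as the core half-width $O(\varepsilon_i^j)\leq O(1/h)$, so the chain-rule argument that intermediate images stay in the core does not close. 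The ``Vitali-type'' disjointification is also asserted without proof; slabs are not balls, and the paper instead proves disjointness via an explicit ordering/separation argument (Corollary~\ref{corollary}).

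The paper's construction avoids the buffer-inflation problem entirely: instead of a compactly supported bump that is the identity on the core, it uses the collapsing function $f_j(z)=z_j-\Haus^1(\{x\in A_j:\hat x_j=\hat z_j,\,x_j\leq z_j\})$, for which $\partial_j f_j$ equals $0$ inside $A_j$ and $1$ outside, hence lies in $[0,1]$ after mollification and never exceeds $1$. This map is \emph{not} the identity outside the slabs (it shifts the space above each slab downward by the slab width), which is exactly what buys $\det\leq$ bounded with no $\det>1$ buffer. Degeneracy is then obtained on a compact $F_j\subset A_j\cap K_j$ chosen to carry all but $\delta/(2(2n)^n)$ of the singular mass, and mollification at scale $\eta=\min_j\dist(F_j,\R^n\setminus A_j)$ makes $\partial_j g_j$ vanish on $F_j$; the residual set $\R^n\setminus\bigcup_j F_j$ has $\mu_{\mathrm{sing}}$-mass $\leq\delta/(2n)^n$, and the Lipschitz bound $(2n)^n$ on $|\det\nabla g_\delta|$ closes the estimate. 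If you want to keep your bump-function idea, you would need to (a) make the cores compact subsets of the slabs chosen to carry most of the $\mu_s$-mass (as in the paper's choice of $F_j$), not midpoint-centered intervals, and (b) accept $\det>1$ in the transition but make the $\mu_s$-mass of the transition set explicitly small, rather than trying to control it through slab width.
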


\subsection*{Acknowledgements}
The author 
was supported by the ERC Grant 306247 \emph{Regularity of area minimizing currents}.  

\section{Notation and preliminaries}
We begin this section by introducing some general notations about measures. Then we define the notion of cone-null set (see \cite{ACP3}) and we recall some properties of the decomposability bundle of a measure, defined in \cite{AM}. Lastly we recall a fact from \cite{AM}: a measure is supported on a $C$-null set, for some closed cone $C$, whenever its decomposability bundle intersects $C$ only at the origin.

\subsection{General notation}
Through this note, sets and functions on $\R^n$ are assumed to be Borel measurable, and measures on $\R^n$ are positive, finite, Radon measures on the Borel $\sigma$-algebra, with the obvious exception of the Lebesgue measure $\Leb^n$ and the Hausdorff measures $\Haus^k, (k\leq n)$. We say that a measure $\mu$ on $\R^n$ is supported on the (Borel) set $E$ if $\mu(\R^n\setminus E)=0$. We say that a measure $\mu$ is absolutely continuous with respect to a measure $\nu$, and we write $\mu\ll\nu$, if $\mu(E)=0$ for every Borel set $E$ with $\nu(E)=0$. We say that $\mu$ is singular with respect to $\nu$ if $\mu$ supported on a Borel set $E$ with $\nu(E)=0$. If we do not specify what is the corresponding measure $\nu$, we always implicitly refer to the Lebesgue measure. If $\mu$ is a measure and $E$ is a Borel set, we denote $\mu\trace E$ the measure defined by
$$\mu\trace E(A)=\mu(A\cap E), \quad\mbox{ for every Borel set }A.$$
\subsection{Rectifiable sets}
Given $m=1,2,\dots$, a subset $E \subset \R^n$ is called $m$-\emph{rectifiable} if $\Haus^{m}(E) < \infty$ and $E$ can be covered, except for an $\Haus^{m}$-null subset, by countably many $m$-dimensional surfaces of class $C^{1}$. If $E$ is $m$-rectifiable, then one can define for $\Haus^{m}$-a.e. $x \in E$ a notion of $m$-dimensional \emph{approximate tangent space} to $E$. Such a tangent space will be denoted ${\rm Tan}(E,x)$ and it coincides with the classical tangent space if $E$ is a piece of an $m$-surface of class $C^1$.   

\subsection{Cone-null sets}\label{s:conenull}
For $j=1,\ldots, n$, we introduce the positive closed cones
$$C_j^+:=\{x=(x_1,\ldots,x_n)\in\R^n:x_j\geq 2^{-\frac{1}{2}}|x|\}.$$
For every $j=1,\ldots, n$, we denote also the cones $C_j:=C_j^+\cup(-C_j^+)$. 
Notice that any $k$-tuple ($k\leq n$) of vectors lying in the interior of different cones is linearly independent.

Given a cone $C_j$ we call \emph{$C_j$-curve} any set of the form $G=\gamma(J)$, where $J$ is a compact interval in $\R$ and $\gamma:J\to\R^n$ is Lipschitz and satisfies $\gamma'(s)\in C_j$ for a.e. $s\in J$. It is important to observe that the condition of being a $C_j$-curve is closed under uniform convergence of the corresponding Lipschitz functions (when the curves are parametrized on the same interval $J$). 
Following \cite{ACP3}, we say that a set $E$ in $\R^n$
is \emph{$C_j$-null} if
$$\Haus^1(E\cap G)=0,$$ for every $C_j$-curve $G$, where $\Haus^1$ denotes the 1-dimensional Hausdorff measure.

One of the main tools that we need from \cite{AM} is the following lemma (see \cite[Lemma 7.3]{AM}), which is a corollary of the general result of \cite{R}. We refer the reader to \cite[Section 2.3]{AM} for a formal definition on the notion of integral of a parametrized family of measures.
\begin{lemma}
\label{l-rainwatercor2}
Let $j\in\{1,\dots,n\}$. For every measure $\mu$ on $\R^n$, one of the 
following (mutually incompatible) alternatives 
holds:
\begin{itemizeb}
\item[(i)]
$\mu$ is supported on a Borel set $E$ which 
is $C_j$-null;
\item[(ii)]
there exists a non-trivial measure of
the form $\smash{\mu'=\int_0^1 \, \mu_t\, dt}$
where $\mu'$ is absolutely continuous \wrt $\mu$, 
each $\mu_t$ is the restriction of $\Haus^1$ 
to some $1$-rectifiable set $E_t$, and
\[
\Tan(E_t,x) \subset C_j,
\quad\text{ for $\mu_t$-a.e.~$x$ and a.e.~$t$.}
\]
\end{itemizeb}
\end{lemma}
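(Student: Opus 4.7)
The plan is to prove the dichotomy by a maximality argument, producing the measure $\mu'$ of alternative (ii) as the largest submeasure of $\mu$ admitting the required superposition structure, and then invoking Rainwater's theorem to show that triviality of this maximizer forces alternative (i). Define $\mathcal{M}_j$ to be the family of finite Borel measures on $\R^n$ of the form $\int_0^1 \Haus^1\trace E_t\,dt$ with each $E_t$ a $1$-rectifiable set satisfying $\Tan(E_t,x)\subset C_j$ for $\Haus^1$-a.e.\ $x\in E_t$. This class is closed under countable sums of finite total mass (reparametrize $[0,1]$) and under restriction to a Borel set (intersect each $E_t$ with that set). Set
\[
M:=\sup\{\nu(\R^n):\nu\in\mathcal{M}_j,\ \nu\leq\mu\},
\]
which is finite since $\mu$ is. A maximizing sequence $\nu_k$ can be replaced by an increasing sequence $\nu_k'$ still in $\mathcal{M}_j$, by putting $\nu_1':=\nu_1$ and $\nu_{k+1}':=\nu_k'+\nu_{k+1}\trace A_k$, where $A_k$ is the Borel set on which the Radon--Nikodym density of $\nu_{k+1}$ exceeds that of $\nu_k'$; the monotone limit $\nu^*$ then lies in $\mathcal{M}_j$, is bounded by $\mu$, and realizes $\nu^*(\R^n)=M$.

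If $M>0$, alternative (ii) holds with $\mu':=\nu^*$: it is non-trivial, dominated by $\mu$ (so absolutely continuous with respect to it), and by construction of the required integral form. Note also that (i) and (ii) are mutually incompatible: if $\mu$ were supported on a $C_j$-null Borel set $E$ and $\mu'=\int \Haus^1\trace E_t\,dt$ were non-zero and absolutely continuous with respect to $\mu$, then since each $E_t$ is $\Haus^1$-a.e.\ covered by countably many $C_j$-curves (by $1$-rectifiability together with the tangent condition), $\Haus^1(E\cap E_t)=0$ for a.e.\ $t$, forcing $\mu'(E)=0$ and hence $\mu'=0$, a contradiction.

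The remaining case is $M=0$, where I need to derive alternative (i). Arguing by contradiction, suppose $\mu$ is not supported on any $C_j$-null Borel set; the goal is to produce a non-zero $\nu\in\mathcal{M}_j$ with $\nu\leq c\mu$ for some $c>0$, contradicting $M=0$. This is the crux of the proof and is where Rainwater's theorem \cite{R} enters. Consider the Polish space $\Gamma$ of $1$-Lipschitz curves $\gamma\colon[0,1]\to\R^n$ with $\gamma'(s)\in C_j$ for a.e.\ $s$, under the uniform topology; the tangent constraint is preserved under uniform convergence (as noted in \S\ref{s:conenull}), so $\Gamma$ is closed in $C([0,1];\R^n)$ and thus Polish. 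The failure of (i) translates into the non-degeneracy hypothesis of Rainwater's theorem, namely that no Borel set of full $\mu$-measure has $\Haus^1$-null intersection with $\gamma([0,1])$ for every $\gamma\in\Gamma$. Rainwater's theorem then produces a probability measure $\pi$ on $\Gamma$ such that $\nu:=\int_\Gamma \Haus^1\trace\gamma([0,1])\,d\pi(\gamma)$ is non-zero and absolutely continuous with respect to $\mu$, with Radon--Nikodym density bounded (after rescaling) by $1$; this gives the desired contradiction and completes the proof. The main obstacle is precisely this invocation: encoding the $C_j$-curves as a Polish family, verifying the Borel measurability of $\gamma\mapsto\Haus^1\trace\gamma([0,1])$, and translating the ``not $C_j$-null'' hypothesis on $\mu$ into the exact non-degeneracy condition required by Rainwater.
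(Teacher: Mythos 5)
The paper does not prove this lemma; it quotes it verbatim from \cite[Lemma 7.3]{AM} and merely remarks that it is a corollary of Rainwater's general result \cite{R}, so there is no in-paper proof to compare against. Your outline follows the route these citations suggest, but it has two problems. The maximizing-sequence step is both flawed and unnecessary: setting $\nu_{k+1}':=\nu_k'+\nu_{k+1}\trace A_k$ does not preserve the bound $\nu_{k+1}'\leq\mu$, because on $A_k$ the densities of $\nu_k'$ and $\nu_{k+1}$ are added and their sum can exceed $1$; the measure you actually want is the lattice supremum $\nu_k'\trace(\R^n\setminus A_k)+\nu_{k+1}\trace A_k$. But even this is beside the point, since alternative (ii) only asks for \emph{some} non-trivial $\mu'\ll\mu$ of the prescribed form: any $\nu\in\mathcal{M}_j$ with $0<\nu\leq\mu$ already witnesses (ii), and conversely any non-trivial $\mu'\ll\mu$ in $\mathcal{M}_j$ yields a non-trivial $c^{-1}\mu'\trace\{0<d\mu'/d\mu\leq c\}\in\mathcal{M}_j$ dominated by $\mu$, so that ``$M=0$'' is literally the negation of (ii). No maximizer is needed.

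The more serious issue is that the step that carries all of the content --- deducing alternative (i) from $M=0$ via Rainwater's theorem --- is not carried out; you identify it as the crux, list what would have to be checked, and stop. Setting up the Polish space of $C_j$-curves, verifying Borel measurability of $\gamma\mapsto\Haus^1\trace\gamma([0,1])$, translating failure of $C_j$-nullity into Rainwater's non-degeneracy hypothesis, and then reparametrizing the resulting disintegration $\int_\Gamma\Haus^1\trace\gamma([0,1])\,d\pi(\gamma)$ over $[0,1]$ by pushing forward Lebesgue measure onto $\pi$: this is precisely the substance of \cite[Lemma 7.3]{AM}, and as written your argument is a plan for it rather than a proof of it.
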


\subsection{Decomposability bundle of a measure}\label{s:decomp}
In \cite{AM}, to any Radon measure $\mu$ is assigned a Borel map $x\mapsto V(\mu,x)$, called the \emph{decomposability bundle} of the measure $\mu$, which associates to every point $x\in \R^n$ a vector subspace of $\R^n$. Roughly speaking, one constructs such vector subspace writing ``pieces'' of $\mu$ as an integral of a parametrized family of 1-dimensional rectifiable measures and collecting all the corresponding tangential directions at every point. We refer the reader to \cite[Section 2.6]{AM} for the precise definition. Here we recall only a property which we strictly need in the present note. Even if here we state it as a lemma, indeed such property follows from the very definition of decomposability bundle.

\begin{lemma}\label{l:decomp_bund}
Let $\mu$ be a measure on $\R^n$. Assume there exists a non-trivial measure of
the form $\smash{\mu'=\int_0^1 \, \mu_t\, dt}$
where $\mu'$ is absolutely continuous \wrt $\mu$ and
each $\mu_t$ is the restriction of $\Haus^1$ 
to some $1$-rectifiable set $E_t$. Then
\[
\Tan(E_t,x) \subset V(\mu,x),
\quad\text{ for $\mu_t$-a.e.~$x$ and a.e.~$t$.}
\]
\end{lemma}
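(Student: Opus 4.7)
The plan is to verify that the conclusion is an essentially immediate consequence of the construction of the decomposability bundle in \cite[Section 2.6]{AM}, as the author himself points out just before the statement. Recall that $V(\mu,\cdot)$ is defined at $\mu$-a.e.~point by collecting all tangent directions arising from parametrized families of $1$-rectifiable sets whose integrated measure is absolutely continuous with respect to $\mu$. More precisely, one considers all families $\{N_s\}_{s\in[0,1]}$ of $1$-rectifiable sets such that $\int_0^1 (\Haus^1\trace N_s)\,ds \ll \mu$, and $V(\mu,x)$ is required to contain $\Tan(N_s,x)$ for a.e.~$s$ and $\Haus^1\trace N_s$-a.e.~$x$. (In \cite{AM}, $V(\mu,\cdot)$ is then the $\mu$-a.e.~minimal Borel subbundle with this property.)

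Given the hypothesis of the lemma, the plan is to observe that the family $\{E_t\}_{t\in[0,1]}$ is precisely of the form considered in the definition: setting $\mu_t=\Haus^1\trace E_t$, the measure $\mu'=\int_0^1\mu_t\,dt$ is absolutely continuous with respect to $\mu$ by assumption. Hence the inclusion $\Tan(E_t,x)\subset V(\mu,x)$ for $\mu_t$-a.e.~$x$ and a.e.~$t$ is built into the definition of $V(\mu,\cdot)$. The non-triviality assumption plays no role in the inclusion itself; it only ensures that the statement carries actual information, by guaranteeing that the set on which the tangent directions are prescribed has positive $\mu$-measure.

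The one mildly technical point is reconciling the quantifier ``$\mu_t$-a.e.~$x$ and a.e.~$t$'' with the $\mu$-a.e.~statement implicit in the definition of $V(\mu,\cdot)$. This is handled by Fubini applied to the measure $dt\otimes d\mu_t$ on $[0,1]\times\R^n$: since $\int_0^1\mu_t\,dt=\mu'\ll\mu$, any $\mu$-negligible set is $dt\otimes d\mu_t$-negligible, and hence negligible for $\mu_t$ for a.e.~$t$. I do not expect any substantial obstacle, as the lemma is little more than a restatement of the defining property of the decomposability bundle in the form required for the subsequent applications in this note.
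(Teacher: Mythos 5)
Your proposal matches the paper's treatment: the paper offers no proof and simply remarks that the lemma "follows from the very definition of decomposability bundle," which is exactly the point you make. The small Fubini observation you add to reconcile the quantifiers is a reasonable clarification and does not change the substance.
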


Combining Lemma \ref{l-rainwatercor2} and Lemma \ref{l:decomp_bund} we immediately get the following proposition.

\begin{proposition}\label{cones}
Let $j\in\{1,\dots,n\}$ and let $\mu$ be a measure on $\R^n$. If $V(\mu,x)\cap C_j=\{0\}$ for $\mu$-a.e. $x\in\R^n$, then $\mu$ is supported on a Borel set $E$, which is $C_j$-null.
\end{proposition}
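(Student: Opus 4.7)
The plan is to argue by contradiction using the dichotomy provided by Lemma \ref{l-rainwatercor2}. Suppose the conclusion fails, i.e. $\mu$ is not supported on any $C_j$-null Borel set. Then alternative (ii) of Lemma \ref{l-rainwatercor2} applies, providing a non-trivial measure
\[
\mu' = \int_0^1 \mu_t \, dt
\]
with $\mu' \ll \mu$, where each $\mu_t = \Haus^1 \trace E_t$ for some $1$-rectifiable set $E_t$, and such that $\Tan(E_t, x) \subset C_j$ for $\mu_t$-a.e. $x$ and a.e. $t$.

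Next I would apply Lemma \ref{l:decomp_bund} to this same decomposition of $\mu'$, obtaining $\Tan(E_t,x) \subset V(\mu,x)$ for $\mu_t$-a.e. $x$ and a.e. $t$. Intersecting the two containments gives
\[
\Tan(E_t,x) \subset V(\mu,x) \cap C_j
\quad \text{for $\mu_t$-a.e. $x$ and a.e. $t$.}
\]
Since $E_t$ is $1$-rectifiable, $\Tan(E_t,x)$ is a $1$-dimensional subspace of $\R^n$ at $\mu_t$-a.e. point, so the left-hand side is never $\{0\}$ at those points. Hence $V(\mu,x) \cap C_j \neq \{0\}$ on a set of positive $\mu_t$-measure, for a positive measure set of parameters $t$ (such parameters exist because $\mu'$ is non-trivial).

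Finally, integrating in $t$ we obtain $\mu'(\{x : V(\mu,x) \cap C_j \neq \{0\}\}) > 0$, and since $\mu' \ll \mu$, also $\mu(\{x : V(\mu,x) \cap C_j \neq \{0\}\}) > 0$, contradicting the hypothesis. The only subtlety, which I expect to be essentially mechanical, is the measure-theoretic bookkeeping ensuring that the $\mu_t$-a.e. statements from Lemmas \ref{l-rainwatercor2} and \ref{l:decomp_bund} can be intersected on a common full-measure set and then integrated back to a $\mu'$-positive measure statement; this is routine given the Fubini-type structure of the representation $\mu' = \int_0^1 \mu_t \, dt$.
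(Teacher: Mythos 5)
Your proof is correct and is precisely the argument the paper leaves implicit: Proposition~\ref{cones} is stated to follow ``immediately'' from combining Lemma~\ref{l-rainwatercor2} and Lemma~\ref{l:decomp_bund}, and your write-up (apply the dichotomy, rule out alternative (ii) by intersecting the two tangent containments and using that $\Tan(E_t,x)$ is one-dimensional) is exactly the intended deduction.
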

We remark that the reverse implication holds true as well, nevertheless we will not need this fact in the present note.
\section{Structure  of cone-null sets}
One of the main ideas for the proof of Theorem \ref{main} is borrowed from \cite{ACP1}, where the main result is deduced from a geometric interpretation of the classical combinatorial result of \cite{D}, due to Dilworth (see also \cite{ES}).

\noindent In a partially ordered set $(\mathcal{S},\leq)$, with the term \emph{chain} we denote a totally ordered subset of $\mathcal{S}$. An \emph{antichain} is a subset $S\subset\mathcal{S}$ such that for every $(s,t)\in S\times S$ with $s\leq t$ it holds $s=t$. The following theorem (see \cite{M}) is a dual version of Dilworth's theorem. For the reader's convenience we include its short proof. We denote by $\sharp(S)$ the number of elements of the set $S$.

\begin{theorem}\label{mirsky}
Let $(\mathcal{S},\leq)$, be a partially ordered finite set. Then the maximal cardinality of a chain in $\mathcal{S}$ equals the smallest number of antichains into which $\mathcal{S}$ can be partitioned.
\end{theorem}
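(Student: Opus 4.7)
The plan is to prove the two inequalities separately. Let $k$ denote the maximal cardinality of a chain in $\mathcal{S}$ and let $m$ denote the smallest number of antichains into which $\mathcal{S}$ can be partitioned.

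First I would establish $k \leq m$. This is the easy direction: fix a chain $C = \{s_1 < s_2 < \cdots < s_k\}$ of maximal length and any partition $\mathcal{S} = A_1 \cup \cdots \cup A_m$ into antichains. Since any two distinct elements of $C$ are comparable, no two of them can lie in the same antichain $A_i$. Hence the map sending $s_\ell$ to the index $i$ with $s_\ell \in A_i$ is injective, yielding $k \leq m$.

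The main content is the reverse inequality $m \leq k$. For each $s \in \mathcal{S}$, define $h(s)$ to be the maximal cardinality of a chain in $\mathcal{S}$ whose greatest element is $s$. Clearly $h(s) \in \{1, \ldots, k\}$ for every $s \in \mathcal{S}$. I would then set
\[
A_i := \{s \in \mathcal{S} : h(s) = i\}, \qquad i = 1, \ldots, k,
\]
so that $\mathcal{S} = A_1 \cup \cdots \cup A_k$ (disjoint union, possibly with some $A_i$ empty). It remains to check that each $A_i$ is an antichain. Assume by contradiction that $s, t \in A_i$ with $s < t$ and $s \neq t$. Pick a chain $s_1 < s_2 < \cdots < s_i = s$ realizing $h(s) = i$. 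Appending $t$ produces a chain $s_1 < \cdots < s_i < t$ of cardinality $i+1$ with greatest element $t$, contradicting $h(t) = i$. Hence $A_i$ is an antichain, and this partition shows $m \leq k$.

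I do not anticipate a genuine obstacle: the only subtle point is the construction of the height function $h$ and the verification that equal-height elements cannot be comparable, which uses crucially that $\mathcal{S}$ is finite so that the sup in the definition of $h(s)$ is attained.
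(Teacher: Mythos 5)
Your proof is correct and follows essentially the same route as the paper: your height function $h(s)$ coincides with the paper's $l(s)$ (a maximal element of a chain is its greatest element), the level sets $A_i$ are shown to be antichains by the same appending argument, and the easy inequality is handled by the same pigeonhole observation on a maximum chain. Your write-up is a bit more explicit in verifying that each $A_i$ is an antichain, but the underlying argument is identical.
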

\begin{proof}
For every $s\in \mathcal{S}$, let 
$$l(s):=\sup\{\sharp(S): \mbox{ $S$ is a chain and $s$ is a maximal element of $S$}\}.$$ 
Let $L:=\max\{l(s):s\in \mathcal{S}\}$. Clearly, for every $j=1,\ldots,L$, the set
$$A_j=\{s\in \mathcal{S}:l(s)=j\}$$
is an antichain and $$\mathcal{S}=\bigcup_{j=1}^L A_j.$$ It is not possible to find a partition with a smaller number of antichains, since every two elements of the chain of maximal cardinality necessarily belong to different antichains.
\end{proof}

The next proposition follows from the previous theorem, considering on any finite subset of $\R^n$ the partial order induced by the closed cones $C_j^+$. More precisely, for fixed $j\in\{1,\ldots, n\}$ and $\mathcal{S}$ a finite subset of $\R^n$, we introduce the following partial order on $\mathcal{S}:$
\begin{equation}\label{order}
s\leq t \quad\mbox{ if $s,t\in\mathcal{S}$ satisfy $t=s+v$, for some $v\in C_j^+$}.
\end{equation}
A crucial (although elementary) observation regarding such partial order is that every antichain $A$ is the graph of a $1$-Lipschitz function ${f:\pi_jA\subset\{x_j=0\}\to\R}$, where $\pi_j$ is the orthogonal projection onto $\{x_j=0\}$.

\begin{proposition}\label{coveringtheorem}
Let $E$ be a compact set in $\R^n$ which is $C_j$-null. Then for every $\delta>0$, there are (finitely many) piecewise affine, $1$-Lipschitz functions $f_1,\dots,f_N:\R^{n-1}\to\R$, such that

$$E\subset \bigcup_{i=1}^N I^j_{\delta/N}(f_i).$$
\end{proposition}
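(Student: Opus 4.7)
The plan is to apply Mirsky's theorem (Theorem \ref{mirsky}) to a finite $\eta$-discretization of $E$, relying on the $C_j$-null hypothesis to make the number of antichains scale favorably as $\eta\to 0$.

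Fix $\eta>0$ (to be chosen small) and let $\mathcal{S}\subset E$ be a maximal $\eta$-separated subset, finite by compactness of $E$ and $\eta$-dense in $E$ by maximality. Equip $\mathcal{S}$ with the partial order \eqref{order}. By Theorem \ref{mirsky}, $\mathcal{S}=A_1\sqcup\cdots\sqcup A_{N(\eta)}$, where $N(\eta)$ is the maximum chain length. As noted after \eqref{order}, each $A_i$ is a $1$-Lipschitz graph on $\pi_jA_i\subset\{x_j=0\}$; extending via a piecewise affine McShane construction produces $1$-Lipschitz functions $f_i\colon\R^{n-1}\to\R$. For any $x\in E$, pick $s\in\mathcal{S}$ with $|x-s|\le\eta$ and $s\in A_i$: then
\[
|x_j-f_i(\hat x_j)| \le |x_j-s_j|+|f_i(\hat s_j)-f_i(\hat x_j)| \le 2\eta,
\]
so $E\subset\bigcup_i I^j_{2\eta}(f_i)$. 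To obtain the uniform widths $\delta/N(\eta)$ demanded by the statement, it suffices to show $N(\eta)\eta\to 0$; any choice of $\eta$ with $2N(\eta)\eta\le\delta$ then gives $I^j_{2\eta}(f_i)\subset I^j_{\delta/N(\eta)}(f_i)$ and concludes the proof.

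This key estimate is where the $C_j$-null hypothesis enters, via an Arzel\`a--Ascoli argument by contradiction. If $N(\eta_k)\eta_k\ge c>0$ along some sequence $\eta_k\to 0$, take maximal chains $v_1^{(k)}<\cdots<v_{N_k}^{(k)}$ in $\mathcal{S}_{\eta_k}$ and form the polygonal $C_j$-curves $\gamma_k$ through their vertices; each $\gamma_k$ has length at least $(N_k-1)\eta_k\ge c-o(1)$ and lies in $\operatorname{conv}(E)$. Rescaling to arc length on $[0,1]$ makes the $\gamma_k$ uniformly Lipschitz, and a subsequence converges uniformly to a Lipschitz $\gamma_\infty$ which remains a $C_j$-curve by closedness and convexity of $C_j^+$. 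The chain vertices $v_i^{(k)}\in E$ should become dense in the image $\gamma_\infty([0,1])$ as $k\to\infty$, and since $E$ is closed this forces $\gamma_\infty([0,1])\subset E$; this contradicts the $C_j$-nullity of $E$, since $\Haus^1(\gamma_\infty([0,1]))>0$.

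The most delicate point is the vertex-density step: controlling the maximal gap between consecutive chain vertices is not automatic from $\eta$-separation alone, and would require either a refinement of $\mathcal{S}$ or a careful choice of chains within $\mathcal{S}$ so that the limit polygonal is in fact swept out by the limits of chain vertices. This is the main technical obstacle, and it is the precise point where the $C_j$-null hypothesis has to be used quantitatively rather than merely qualitatively.
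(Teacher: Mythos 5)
Your overall strategy coincides with the paper's: discretize $E$, apply Mirsky's theorem to the cone-induced partial order, extend the resulting antichains to $1$-Lipschitz graphs, and derive the key estimate $N(\eta)\eta\to 0$ by an Arzel\`a--Ascoli contradiction using the $C_j$-null hypothesis. The discretization is slightly different (the paper uses centres of grid cells of $[0,1]^n$, you use maximal $\eta$-separated subsets of $E$), but that difference is immaterial.

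The genuine gap is exactly the one you flag at the end, and it is not a cosmetic difficulty: the ``vertex-density'' step is simply the wrong target. It is false in general that the chain vertices $v_i^{(k)}$ become dense in $\gamma_\infty([0,1])$ --- consecutive vertices can be at distance up to $\mathrm{diam}(E)$, so under arc-length parametrization the vertex parameters need not fill $[0,1]$; the usual diagonal extraction only yields a countable (hence $\Haus^1$-null) set of parameters where $\gamma_\infty$ provably meets $E$. Moreover the conclusion you aim for, $\gamma_\infty([0,1])\subset E$, is far stronger than needed and would not follow from density of the $v_i^{(k)}$ in the image anyway.

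What is actually needed, and what the paper proves, is only that $\Haus^1\bigl(\gamma_\infty(I)\cap E\bigr)>0$, and this follows from a distance-function argument rather than from density of vertices. Set $\phi_k(t):=\mathrm{dist}\bigl(\gamma_k(t),E\bigr)$. Since the $v_i^{(k)}$ lie in $E$ and consecutive vertices are at least $\eta_k$ apart (and hence their $e_j$-coordinates differ by at least $\eta_k/\sqrt{2}$, and their parameters by at least $\eta_k/L_k$), one gets $\phi_k\le \eta_k/2$ on a union of $N_k$ disjoint parameter intervals whose total length is bounded below by a constant $>0$ depending only on $c$ and $\sup_k L_k$. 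Uniform convergence $\gamma_k\to\gamma_\infty$ gives $\phi_k\to\phi:=\mathrm{dist}(\gamma_\infty(\cdot),E)$ uniformly, so $\phi\equiv 0$ on a set of positive measure; by compactness of $E$ this yields $\Haus^1(\gamma_\infty(I)\cap E)>0$, contradicting $C_j$-nullity. Replacing your final paragraph by this argument makes the proof complete and brings it in line with the paper's; the rest of your construction (Mirsky's theorem, McShane extension, the inclusion $I^j_{2\eta}(f_i)\subset I^j_{\delta/N(\eta)}(f_i)$) is correct as written.
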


\begin{proof}
Without loss of generality, we may assume $E \subset [0,1]^n$. For every $k\in\N$, let $G_k$ be the orthogonal grid obtained dividing each side of $[0,1]^n$ into $k$ equal parts. Let $E_k$ be the set of the centers of the cells of $G_k$ which have non-empty intersection with the set $E$. Consider on $G_k$ the partial order defined in \eqref{order}.

Denote by $\ell_k$ the maximal cardinality of a chain in $E_k$. Our first aim is to prove that 
\begin{equation}\label{opiccolo}
\lim_{k\to\infty}\frac{\ell_k}{k}=0,
\end{equation}
in order to deduce from Theorem \ref{mirsky} that $E_k$ can be covered with $o(k)$ antichains. 

Assume by contradiction that there exist $l>0$ such that, for infinitely many indexes $k$, there is a chain $C_k:=(c_1^{k},\ldots,c_{m_k}^{k})$ in $E_k$ of cardinality at least $lk$. For $i=1,\ldots,m_k$, denote
$t_{i}:=c_{i}^{k}\cdot e_j$ and consider a function
$g_k:\{0,t_1,\ldots,t_{m_k},1\}\rightarrow [0,1]^n$, defined by $$g_k(t_i):=c_i^{k},\quad\mbox{ for every $i=1,\ldots,m_k$}$$
and
$$g_k(0):=c_1^k-t_1e_j,\quad g_k(1):=c^k_{m_k}+(1-t_{m_k})e_j.$$
Extend $g_k$ to a curve $\gamma_k:[0,1]\to[0,1]^n$ which is affine on $[0,t_1]$, on $[t_{m_k},1]$ and on $[t_i,t_{i+1}]$ for every $i=1,\ldots,m_{k-1}$. Clearly $\gamma_k([0,1])$ is a $C_j$-curve, and by construction, $\gamma_k$ is $\sqrt{2}$-Lipschitz. Hence, up to a (non-relabelled) subsequence, $\gamma_k$ converges to a Lipschitz function $\gamma$ as $k\to\infty$ and $\gamma(I)$ is a $C_j$-curve, as observed in \S\ref{s:conenull}. We want to show that $\Haus^1(\gamma(I)\cap E)>0$, which would be a contradiction, since $E$ is $C_j$-null. For every $k$ define a function $\phi_k:[0,1]\rightarrow\R$ by
$$\phi_k(t):={\rm{dist}}(\gamma_k(t),E).$$
Since $\gamma_k$ uniformly converges to $\gamma$, then $\phi_k$ uniformly converges to the continuous function 
$$\phi:= t\mapsto{\rm{dist}}(\gamma(t),E).$$ 

Observe that for every $k$ and for every $t\in[0,1]$ such that $\gamma_k(t)$ belongs to a cell of $G_k$ which contains one of the $c^k_i$ it holds $\phi_k(t)\leq k^{-1}\sqrt{n}$. The set $I_k\subset[0,1]$ of such parameters $t$ has length $|I_k|\geq l$, by the contradiction assumption, hence we have $$\phi_k\leq k^{-1}\sqrt{n},\quad \mbox{ on a set of length at least $l$, for every $k$}.$$
Fix now $\varepsilon>0$, and let $k$ be such that $\|\phi_k-\phi\|_\infty\leq \varepsilon$ and $k^{-1}\sqrt{n}\leq\varepsilon$. Then by triangular inequality $\phi\leq 2\varepsilon$ on a set of length at least $l$. This proves that $\phi\equiv0$ on a set of length at least $l$, hence, since $E$ is compact, we have the contradiction that the $C_j$-curve $\gamma(I)$ satisfies 
$$\Haus^1(\gamma(I)\cap E)\geq l.$$

This proves \eqref{opiccolo}. Now by Theorem \ref{mirsky}, $E_k$ can be covered by $\ell_k$ antichains. As we observed after \eqref{order}, every antichain $A$ is the graph of a $1$-Lipschitz function $h_A$, defined on a discrete set contained in $\{x_j=0\}$, with values in $[0,1]$. For every antichain $A$, let $f_A$ be a (piecewise affine) $1$-Lipschitz extension of $h_A$ to $\{x_j=0\}$. The open slab $I^j_{2k^{-1}\sqrt{n}}(f_A)$ of width $2k^{-1}\sqrt{n}$ around $f_A$ contains every cell intersected by the graph of $f_A$. Therefore $E$ can be covered by $\ell_k$ slabs of total width $2\ell_k k^{-1}\sqrt{n}$, which, in view of \eqref{opiccolo}, completes the proof of the proposition.
\end{proof}

\section{Proof of Theorem \ref{main}}
We begin with the following lemma. For $m\leq n$, by $\gamma_{n,m}$ we denote the Haar measure on the Grasmannian $\mbox{Gr}_{n,m}$ of (unoriented) $m$-planes in $\R^n$ (see \cite[Section 2.1.4]{KP}) and by $\sigma$ we denote the Haar measure on the special orthogonal group $SO(n)$. Moreover we denote 
$$S:=\bigcup_{j=1}^n {C_j}.$$
For $n\geq 3$ and for $j=1,\dots,n$ we say that a hyperplane $v\in\mbox{Gr}_{n,n-1}$ is \emph{tangent} to $C_j$ if $C_j\cap v$ is an $(n-2)$-plane. We say that $v$ is tangent to $S$ if it is tangent to $C_j$ for some $j=1,\dots,n$. Notice that if $v$ is not tangent to $C_j$, but $v\cap C_j\neq\{0\}$, then $v$ intersects the interior of $C_j$.

\begin{lemma}\label{lemma} Let $n\geq 3$, let $\mu$ be a finite measure on $\R^n$, and let $V:\R^n\to {\rm{Gr}}_{n,n-1}$ be a Borel map. Then for $\sigma$-almost every rotation $\rho\in SO(n)$ it holds
\begin{equation}\label{e:lemma}
\rho(V(x))\quad\mbox{is not tangent to $S$, for $\mu$-a.e. $x\in\R^n$}.
\end{equation}
\end{lemma}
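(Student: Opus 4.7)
The argument is a direct Fubini reduction resting on two observations.

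The first observation is equivariance: for every fixed hyperplane $w \in \Gr_{n,n-1}$, the push-forward of $\sigma$ under $\rho \mapsto \rho(w)$ coincides with $\gamma_{n,n-1}$. Indeed, $SO(n)$ acts transitively on $\Gr_{n,n-1}$, and both measures are suitably normalized $SO(n)$-invariant probability measures on this homogeneous space, so they agree by uniqueness of Haar measure.

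The second observation is that the tangent set $T := \{v \in \Gr_{n,n-1} : v \text{ is tangent to } S\} = \bigcup_{j=1}^n T_j$ satisfies $\gamma_{n,n-1}(T) = 0$. To see this, identify a hyperplane $v$ with one of its unit normals $u \in S^{n-1}$, choose an orthonormal basis of $v$, and compute the restriction to $v$ of the quadratic form $x \mapsto 2x_j^2 - |x|^2$. The resulting $(n-1)\times(n-1)$ matrix is $2aa^T - I$ with $|a|^2 = 1 - (u\cdot e_j)^2$, whose eigenvalues are $2|a|^2 - 1$ and $-1$ (with multiplicity $n-2$). In view of the remark in the excerpt following the definition of tangency, $v$ is tangent to $C_j$ precisely when this form is negative semidefinite with non-trivial kernel, i.e.\ when $|u \cdot e_j| = 2^{-1/2}$. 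This is a closed codimension-one condition on $u \in S^{n-1}$, so each $T_j$ is contained in a smooth hypersurface of the $(n-1)$-dimensional manifold $\Gr_{n,n-1}$, and the finite union $T$ is $\gamma_{n,n-1}$-null.

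Combining the two observations, I would conclude by Fubini applied to the set
$$E := \{(x, \rho) \in \R^n \times SO(n) : \rho(V(x)) \in T\},$$
which is Borel by measurability of $V$ and continuity of the $SO(n)$-action on $\Gr_{n,n-1}$. For each fixed $x$, equivariance gives $\sigma(\{\rho : \rho(V(x)) \in T\}) = \gamma_{n,n-1}(T) = 0$, hence $(\mu \otimes \sigma)(E) = 0$. The other Fubini slicing then yields $\mu(\{x : \rho(V(x)) \in T\}) = 0$ for $\sigma$-a.e.\ $\rho$, which is precisely \eqref{e:lemma}.

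The main obstacle is the measure-zero claim $\gamma_{n,n-1}(T) = 0$; once the tangency condition is translated into the explicit equation $|u \cdot e_j| = 2^{-1/2}$ on the unit normal, the remainder of the argument is a routine two-line application of Fubini's theorem and uniqueness of Haar measure.
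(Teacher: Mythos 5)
Your argument is correct and is essentially the paper's: both reduce by Fubini's theorem to the observation that tangent hyperplanes form a $\gamma_{n,n-1}$-null set, using the equivariance of the push-forward $\rho\mapsto\rho(w)$ to transport this to a $\sigma$-null set of rotations for each fixed $x$. The only difference is one of detail: you spell out the eigenvalue computation for $2aa^T-I$ that reduces tangency to the codimension-one condition $|u\cdot e_j|=2^{-1/2}$, a step the paper simply asserts, so this is an amplification of the same proof rather than a different route.
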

\begin{proof}
Firstly we observe that $\gamma_{n,n-1}$-almost every $v\in {\rm{Gr}}_{n,n-1}$, is not tangent to $S$. Indeed for every $j\in\{1,\dots,n\}$ the set of $v\in{\rm{Gr}}_{n,n-1}$ which are tangent to $C_j$ has $\gamma_{n,n-1}$-measure zero. In particular for every $v\in{\rm{Gr}}_{n,n-1}$, $\rho(v)$ is not tangent to $C_j$, for $\sigma$-a.e. $\rho\in SO(n)$. Hence, $\rho(v)$ is not tangent to $S$, for $\sigma$-a.e. $\rho\in SO(n)$. 

Now, denote by $f(x,\rho):\R^n\times SO(n)\to\{0,1\}$ the Borel function 
$$f(x,\rho):=
\begin{cases}
1\quad \mbox{if $\rho(V(x))$ is tangent to $S$},\\
0\quad \mbox{otherwise}.
\end{cases}
$$
By Fubini's theorem
$$\int_{x\in\R^n}\int_{\rho\in SO(n)}f(x,\rho)\;d\sigma(\rho)\;d\mu(x)=\int_{\rho\in SO(n)}\int_{x\in\R^n}f(x,\rho)\;d\mu(x)\;d\sigma(\rho).$$
The inner integral in the LHS being zero for every $x$, implies that the inner integral in the RHS is zero for $\sigma$-a.e. $\rho$, which proves the lemma.
\end{proof}
\begin{proof}[Proof of Theorem \ref{main}]
Let $x\mapsto V(\mu,x)$ be the decomposability bundle of the measure $\mu$. Since $\mu$ is singular, by \cite[Corollary 1.12]{DPR} and \cite[Corollary 6.5]{AM} it holds 
\begin{equation}\label{nonfull}
V(\mu,x)\neq\R^n, \quad\mbox {for $\mu$-a.e. $x$}. 
\end{equation}

Firstly we want to prove that, up to a rotation $\rho:\R^n\to\R^n$, the set $E$ of points $x\in\R^n$ such that $V(\mu,x)$ has non trivial intersection with every cone $C_j$, for $j=1,\ldots,n$, has measure $\mu(E)=0$. This is trivial for $n=2$, because $C_1\cap C_2$ is just the union of 2 lines. Let then $n\geq 3$.
 
By \eqref{nonfull} we can find a Borel measurable map $V':\R^n\to \mbox{Gr}_{n,n-1}$, such that
$$V(\mu,x)\subset V'(x),\quad\mbox{for $\mu$-a.e. $x\in\R^n$}.$$
Let $\rho$ be any rotation satisfying \eqref{e:lemma}, where we applied Lemma \ref{lemma} to the map $V:=V'$. For the sake of simplicity we will assume that $\rho$ is the identity map.

Since for $\mu$-a.e. $x$, $V'(x)$ is not tangent to $S$, then by definition of $E$, for $\mu$-a.e. $x\in E$, $V'(x)$ must have non-trivial intersection with the interior of every $C_j$, for $j=1,\dots,n$. Then $\mu(E)=0$, because $\mbox{dim}(V'(x))= n-1$ for every $x\in\R^n$, whereas, as observed in \S\ref{s:conenull}, vectors in the interior of different cones are linear independent.

For $j=1,\ldots, n$, denote $\mu_j:=\mu\trace E_j$, where
\begin{equation}\label{defEj}
E_j:=\{x\in\R^n:V(\mu,x)\cap C_j=\{0\}\mbox{ and }V(\mu,x)\cap C_k\neq\{0\}\mbox{ for }k<j\}.
\end{equation}

Observe that the union over $j$ of the sets $E_j$ covers $\R^n\setminus E$, hence, by the previous discussion, it covers $\mu$-a.e. point of $\R^n$.

Since by definition of $E_j$, for $\mu_j$-a.e. $x$ it holds $V(\mu_j,x)\cap C_j=\{0\}$, then by Proposition \ref{cones}, $\mu_j$ is supported on a $C_j$-null Borel set $F_j$.

The conclusion then follows by decomposing each set $F_j$ as the union of a $\mu$-negligible set and a countable union of compact $C_j$-null sets $\{K_i^j\}_{i\in\N}$ (clearly the property of being $C_j$-null is preserved by subsets) and applying Proposition \ref{coveringtheorem} to each compact set $K_i^j$, choosing the parameter $\delta_i^j$ in the proposition so that $\sum_{i,j}\delta_i^j\leq\delta$.
\end{proof}

\section{Covering with disjoint slabs}
In some circumstances it could be important that the slabs $I_i^j$ in $\mathcal{F}$ of Theorem \ref{main}, corresponding to the same superscript $j$, are disjoint. Moreover it is also possible to require that the corresponding functions $f_i^j$ are of class $C^1$, slightly increasing their Lipschitz constant. We state this result in the following corollary. The complete proof can be found in \cite[Corollary 4.1.3]{Ma} and it is obtained modifying the slabs of Proposition \ref{coveringtheorem} a posteriori. See e.g. the use made in \cite{FK} of this type of covering in the plane, for an interesting application where it is important to have disjoint slabs.

\begin{corollary}\label{corollary}
Let $E$ be a compact set in $\R^n$ which is $C_j$-null and let $\mu$ be a finite Borel measure supported on $E$. Then for every $\varepsilon_0>0$, there exists $\varepsilon\leq\varepsilon_0$ and finitely many $1$-Lipschitz functions $f_1,\dots,f_N:\R^{n-1}\to\R$ such that the slabs $I^j_{\varepsilon/N}(f_1),\dots,I^j_{\varepsilon/N}(f_N)$ are disjoint and satisfy 
$$\mu\left(E\setminus\bigcup_{i=1}^NI_{\varepsilon/N}^j(f_i)\right)=0.$$
\end{corollary}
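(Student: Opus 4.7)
The plan is to post-process the covering provided by Proposition \ref{coveringtheorem} in order to make the slabs pairwise disjoint, at the cost of using thinner slabs. First I would apply Proposition \ref{coveringtheorem} with a parameter $\delta<\varepsilon_0$ to obtain piecewise affine $1$-Lipschitz functions $g_1,\dots,g_M:\R^{n-1}\to\R$ whose slabs $I^j_w(g_i)$, with $w:=\delta/M$, cover $E$. Since the pointwise $\min$ and $\max$ of two $1$-Lipschitz functions are again $1$-Lipschitz, I would iterate these lattice operations to replace the family by a pointwise non-crossing one $\tilde g_1\leq\cdots\leq\tilde g_M$; at each $\hat x$ the multiset of values is unchanged, so the union of the new slabs $I^j_w(\tilde g_i)$ still contains $E$.

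The second ingredient is a genericity observation: since $\mu$ is finite and the graphs $\{x_j=f(\hat x_j)+t\}$ for distinct $t$'s are pairwise disjoint, for any $1$-Lipschitz $f$ the set
\[T(f):=\big\{t\in\R\,:\,\mu(\{x\in\R^n\,:\,x_j=f(\hat x_j)+t\})>0\big\}\]
is at most countable. I would then subdivide each slab $I^j_w(\tilde g_i)$ into $K$ pairwise disjoint parallel sub-slabs $I^j_\eta(\tilde g_i+s_{i,l})$ of width $\eta:=w/K$, for $l=1,\dots,K$, where the shifts $s_{i,l}$ are chosen generically so that the $(n-1)$-dimensional graphs separating adjacent sub-slabs carry no $\mu$-mass. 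For each fixed $i$ the $K$ sub-slabs are then mutually disjoint, and the resulting family of $MK$ sub-slabs covers $E$ up to a $\mu$-null set.

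The main obstacle is that sub-slabs coming from different $\tilde g_i$'s can still overlap, because the non-crossing property allows $\tilde g_{i+1}-\tilde g_i$ to become arbitrarily small over $\pi_j(E)$. This is resolved by coordinating the shifts across different values of $i$, exploiting again the countability of bad levels, so that the full family of sub-slab centers becomes pointwise separated by at least $2\eta$ on a large bounded region containing $\pi_j(E)$; the detailed perturbation argument is carried out in \cite[Corollary 4.1.3]{Ma}. A final surgery outside a large ball---replacing each center by a constant value depending monotonically on the global index, a modification which preserves the $1$-Lipschitz constant---promotes the pairwise disjointness to all of $\R^n$. Setting $N:=MK$ and $\varepsilon:=N\eta=\delta\leq\varepsilon_0$ yields the $N$ disjoint slabs $I^j_{\varepsilon/N}(f_i)$ of width $\varepsilon/N=\eta$ covering $\mu$-almost every point of $E$.
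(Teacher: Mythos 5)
Your Step~1 is exactly the paper's Step~1: applying Proposition~\ref{coveringtheorem}, then replacing the family by its pointwise order statistics via iterated $\min/\max$ to obtain a non-crossing family of $1$-Lipschitz functions whose slabs still cover $E$. That part is correct and matches.

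Step~2 is where you diverge, and where there is a gap. The paper does \emph{not} subdivide the slabs. It keeps the same $N$ functions but slightly \emph{widens} the slabs (to half-width $\varepsilon/N$ with $\varepsilon\in[\delta,2\delta]$) and replaces the sorted functions by the inductively pushed-up family $f^2_1:=f^1_1$, $f^2_i:=\max\{f^2_{i-1}+2\varepsilon/N,\,f^1_i\}$. This forces the separation $f^2_i\geq f^2_{i-1}+2\varepsilon/N$ (hence disjointness) while losing coverage only on finitely many graphs of the form $f^1_i+\varepsilon/N$; the free parameter $\varepsilon\in[\delta,2\delta]$ is then chosen so that these finitely many graphs are $\mu$-null. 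Your plan to tile each slab by $K$ parallel sub-slabs with \emph{constant} shifts $s_{i,l}$ and then ``coordinate the shifts'' is not actually carried out, and it is not obvious it can be: if two consecutive sorted functions coincide on $\pi_j(E)$ (nothing in the construction rules out $\tilde g_i=\tilde g_{i+1}$), then $2K$ pairwise disjoint sub-slabs of half-width $\eta=w/K$ cannot all lie inside the common parent slab of half-width $w$, so after perturbation you must either push sub-slabs outside the parent (and then argue coverage separately for each parent in a way consistent across all overlapping parents) or let the shifts depend on $\hat x_j$ (which is no longer a subdivision into parallel sub-slabs and, if done carelessly, increases the Lipschitz constant). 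The countability-of-bad-levels argument you invoke handles the $\mu$-negligibility of separating graphs, not the disjointness-across-$i$ problem, which is the actual obstacle. Finally, deferring ``the detailed perturbation argument'' to \cite[Corollary 4.1.3]{Ma} does not fill the gap: according to the paper, that reference proves the statement by modifying the slabs ``a posteriori'' via exactly the sort-and-push-up scheme above, not by a subdivision-plus-perturbation scheme, so the citation does not deliver the missing step. The subdivision is also unnecessary overhead: the paper achieves disjointness with the same $N$ slabs simply by widening and pushing up, with the free width parameter $\varepsilon$ supplying the genericity needed to dodge atoms on graphs.
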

\begin{remark}
The interested reader is referred to \cite[Proposition 4.1.15]{Ma} for the details on how to make the slabs disjoint and at the same time requiring that the corresponding functions are of class $C^1$. The price to pay is a small increase in the Lipschitz constant.\\ 
\end{remark}
\begin{proof}[Proof of Corollary \ref{corollary}]
{\bf{Step 1: ordering the slabs.}} Let $f_1,\dots,f_N$ be the functions obtained applying Proposition \ref{coveringtheorem} to the set $E$, with $\delta:=\varepsilon_0/2$. Firstly we define a new set of 1-Lipschitz functions $f^1_1,\dots,f^1_N$ such that
\begin{equation}\label{ordered}
f^1_i\leq f^1_j, \mbox{ for } i<j\quad\mbox{ and }\quad E\subset \bigcup_{i=1}^N I^j_{\delta/N}(f^1_i)=\bigcup_{i=1}^N I^j_{\delta/N}(f_i). 
\end{equation}
To get this, we define, for every $x\in\R^{n-1}$, 
$$f^1_i(x):=f_{\sigma(i,x)}(x),$$
where $\sigma(i,x)$ are defined inductively as follows: let
$$\sigma(1,x):=\min\{j:f_j(x)\leq f_k(x), \mbox{ for every }k=1,\dots, N\},$$
and $I_1(x):=\{\sigma(1,x)\}$; moreover, for $i=2,\dots,N$, let
$$\sigma(i,x):=\min\{j\not\in I_{i-1}(x):f_j(x)\leq f_k(x), \mbox{ for every }k\not\in I_{i-1}(x)\},$$
and
$$I_i(x)=\{\sigma(j,x):j\leq i\}.$$
Observe that the first property in \eqref{ordered} follows directly from the definition of $\sigma(i,x)$ and the second property follows from the simple observation that for every $x$ it holds $I_N(x)=\{1,\dots,N\}$.
Moreover, denoting $E^i_j:=\{x:\sigma(i,x)=j\}$, for every $i=1,\dots,N$ it holds
$$f^1_i=\sum_{j=1}^n\chi_{E^i_j}f_j,$$
hence $f^1_i$ is 1-Lipschitz on each $E^i_j$. Moreover $f_k=f^1_i=f_j$ on $\partial E^i_j\cap\partial E^i_k$. This suffices to prove that $f^1_i$ is 1-Lipschitz for every $i=1,\dots,N$.\\

{\bf{Step 2: separating the slabs.}}
Fix $\varepsilon\in[\delta, 2\delta]$ to be chosen later. We define another set of 1-Lipschitz functions $f^2_1,\dots f^2_N$ such that
\begin{equation}\label{separated}
f^2_i\leq f^2_{i+1}-2\varepsilon/N, \quad \mbox{ for } i=1,\dots,N-1\quad\mbox{ and }\mu\left(E\setminus\bigcup_{i=1}^N I^j_{\varepsilon/N}(f^2_i)\right)=0,
\end{equation}
which completes the proof of the corollary. Again, we construct the functions inductively.
Let $f^2_1:=f^1_1$ and for $i=2,\dots,N$ let 
$$f^2_i:=\max\{f^2_{i-1}+2\varepsilon/N,f^1_i\}.$$
The first property of \eqref{separated} holds by definition. Regarding the second property, we observe that $\bigcup_{i=1}^N I^j_{\varepsilon/N}(f^2_i)$ covers the set 
$$E\setminus\bigcup_{i=1}^{N}{\rm{graph}}(f^1_i+\varepsilon/N).$$ 
To conclude, it is sufficient to choose $\varepsilon\in[\delta,2\delta]$ satisfying 
$$\mu\left(\bigcup_{i=1}^{N}{\rm{graph}}(f^1_i+\varepsilon/N)\right)=0.$$
\end{proof}
\section{Flat chain conjecture}\label{currents}
In this section, we assume the reader to be familiar with the work \cite{AK}. We refer to \cite{AK} also for notation and definitions. As an application of Theorem \ref{main}, we provide a simple proof of the following theorem. We remark that the result has been proved already in \cite[Theorem 1.15]{DPR}, using more technical results from \cite{S}.

\begin{theorem}\label{metric_currents}
Let $T\in\mathbf{M}_n(\R^n)$ be top-dimensional Ambrosio-Kirchheim metric current. Then $\|T\|\ll\Leb^n$.
\end{theorem}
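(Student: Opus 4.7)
The strategy is to argue by contradiction, using Theorem \ref{preiss} as the main analytic input. Assume $\|T\|$ is not absolutely continuous with respect to $\Leb^n$, and apply Theorem \ref{preiss} to $\mu:=\|T\|$ to obtain a sequence of $C^1$ equi-Lipschitz maps $g_h:\R^n\to\R^n$ converging pointwise to the identity with
$$\limsup_{h\to\infty}\int_{\R^n}\det(\nabla g_h)\,d\|T\| \;<\; \|T\|(\R^n) \;=\; \mathbf{M}(T).$$
One expects (from the construction underlying Theorem \ref{preiss}, in which each $g_h$ essentially collapses the narrow Lipschitz slabs carrying the singular part of $\|T\|$ onto their central graphs) that $g_h$ can be arranged so that $\det(\nabla g_h)\geq 0$ for all $h$.

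Next, I would compare $T$ with the push-forward currents $(g_h)_\sharp T \in \mathbf{M}_n(\R^n)$. The heart of the argument is the sharp mass bound
$$\mathbf{M}\bigl((g_h)_\sharp T\bigr) \;\leq\; \int_{\R^n}\det(\nabla g_h)\,d\|T\|,$$
which is stronger than the generic Ambrosio--Kirchheim estimate $\mathbf{M}(g_\sharp T)\leq \Lip(g)^n\mathbf{M}(T)$. The bound is derived from the $C^1$ chain rule for metric currents: for any $C^1$ map $\eta:\R^n\to\R^n$ with $\Lip(\eta_i)\leq 1$,
$$(g_h)_\sharp T(f,\eta_1,\dots,\eta_n) \;=\; T\bigl((f\circ g_h)\,((\det\nabla\eta)\circ g_h)\,\det(\nabla g_h),\,x_1,\dots,x_n\bigr),$$
so that the basic mass estimate $|T(\varphi,x_1,\dots,x_n)|\leq\int|\varphi|\,d\|T\|$ together with Hadamard's inequality $|\det\nabla\eta|\leq \prod_i\Lip(\eta_i)\leq 1$ yields the claim. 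For a merely Lipschitz $\eta$ one first approximates by $C^1$ maps with the same Lipschitz constants and uses continuity of $T$.

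Finally, since $g_h\to\mathrm{id}$ pointwise with uniformly bounded Lipschitz constants, the continuity axiom for Ambrosio--Kirchheim metric currents gives $(g_h)_\sharp T\to T$ weakly in $\mathbf{M}_n(\R^n)$. Lower semicontinuity of the mass then forces
$$\mathbf{M}(T) \;\leq\; \liminf_{h\to\infty}\mathbf{M}\bigl((g_h)_\sharp T\bigr) \;\leq\; \limsup_{h\to\infty}\int_{\R^n}\det(\nabla g_h)\,d\|T\| \;<\; \mathbf{M}(T),$$
a contradiction.

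The main obstacle is establishing the sharp mass inequality $\mathbf{M}((g_h)_\sharp T)\leq \int\det(\nabla g_h)\,d\|T\|$ in the metric-current framework via the chain rule and Hadamard, together with confirming the sign condition $\det(\nabla g_h)\geq 0$ for the maps produced by Theorem \ref{preiss}. Without the sign control, the bound of Theorem \ref{preiss} would only give a signed integral of $\det(\nabla g_h)$, which would not suffice to close the contradiction.
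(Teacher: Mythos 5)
Your contradiction scheme (apply Theorem \ref{preiss}, then exploit the chain rule for metric currents) is the same one the paper uses, but your route through push-forward currents, a sharp mass bound, and lower semicontinuity of mass is genuinely different from the paper's, which never estimates masses at all. The paper instead introduces the signed measure $\mu(B):=T(\chi_B\,dx_1\wedge\cdots\wedge dx_n)$, takes its Hahn decomposition $\mu\trace A+\mu\trace(\R^n\setminus A)$, applies Theorem \ref{preiss} to (say) the positive part $\mu\trace A$, and computes directly
$$\mu\trace A(\R^n)=T(\chi_A\,dx_1\wedge\cdots\wedge dx_n)=\lim_h T\bigl(\chi_A\,d(g_h)_1\wedge\cdots\wedge d(g_h)_n\bigr)=\lim_h\int_A\det(\nabla g_h)\,d\mu<\mu\trace A(\R^n),$$
using only the continuity axiom and the $C^1$ chain rule \cite[(3.2)]{AK}. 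Because $\mu\trace A$ is a positive measure and the test is an exact equality $T(\chi_A\varphi\,dx_1\wedge\cdots\wedge dx_n)=\int_A\varphi\,d\mu$ (no absolute values), no sign information on $\det(\nabla g_h)$ is ever needed.

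The gap in your argument is precisely the one you flag: the sign condition $\det(\nabla g_h)\ge 0$, which is essential to convert your Hadamard-type mass bound (which naturally controls $\int|\det(\nabla g_h)|\,d\|T\|$) into something comparable to the output of Theorem \ref{preiss}. The construction in the paper's proof of Theorem \ref{preiss} does \emph{not} guarantee this sign. Recall $g_j$ is a mollification of $f_j(z)=z_j-\Haus^1(\{x\in A_j:\hat x_j=\hat z_j,\ x_j\le z_j\})$. One has $\partial_j f_j\in\{0,1\}$ (equal to $0$ inside the slabs $A_j$), while the off-diagonal entries $\partial_k f_j$, $k\ne j$, can be as large as $\pm 1$ because the bounding graphs $f_i^j$ are merely $1$-Lipschitz. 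For example in $\R^2$, at a point lying simultaneously inside a slab of $A_1$ with $\partial_2 f_i^1=1$ and inside a slab of $A_2$ with $\partial_1 f_{i'}^2=1$, the matrix $\nabla(f_1,f_2)$ is $\bigl(\begin{smallmatrix}0&1\\1&0\end{smallmatrix}\bigr)$, whose determinant is $-1$; mollification does not remove this. Since the families of slabs for different indices $j$ are not pairwise disjoint, such overlap points cannot be excluded. Without $\det(\nabla g_h)\ge 0$, your chain $\mathbf{M}(T)\le\liminf_h\mathbf{M}((g_h)_\sharp T)\le\limsup_h\int\det(\nabla g_h)\,d\|T\|$ breaks at the second inequality, and the contradiction does not close. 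So you would need either a substantially modified construction of the $g_h$ (not available from the statement of Theorem \ref{preiss} alone), or—as the paper does—a formulation that sidesteps the sign issue by evaluating $T$ against explicit oriented test forms rather than estimating a mass.
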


As it was observed in the proof of \cite[Theorem 3.8]{AK}, Theorem \ref{metric_currents} is a direct consequence of Theorem \ref{preiss}. For the reader's convenience, we include the short proof of this fact at the end of the section.
The existence of the maps $\{g_h\}_{h\in\N}$ in Theorem \ref{preiss} can be obtained with the clever technique used in \cite[Lemma 4.12]{AM}, which on the other hand is a particular case of a result contained in \cite{ACP3}. Here we show a proof which we find slightly more geometrically transparent, using the slabs given by Theorem \ref{main}.

\begin{proof}[Proof of Theorem \ref{preiss}]
Assume without loss of generality that $\mu$ is supported on $[0,1]^n$. We denote by $\mu_{ac}$ and $\mu_{sing}$ respectively the absolutely continuous and the singular measures given by the Radon Nikod\'ym decomposition of $\mu$ (see \cite[Theorem 2.22]{AFP}). Remember that by assumptions $\mu_{sing}\neq 0$.
Let $\rho$ be the rotation given by Theorem \ref{main} applied to the measure $\mu_{sing}$. Up to a change of coordinates, we can assume that $\rho$ is the identity map. 
For arbitrarily small $\delta>0$ we will construct a smooth $2n$-Lipschitz map $g_\delta:\R^n\to\R^n$ such that, denoting Id$:\R^n\to\R^n$ the identity map, it holds $|g_\delta-{\rm{Id}}|\leq\delta$ and 
\begin{equation}\label{jacobian}
\int_{\R^n} {\det}(\nabla g_\delta) d\mu_{sing}\leq\delta,
\end{equation}
which clearly implies the theorem, by the well-known $w^*$-continuity property of determinants in the Sobolev space $W^{1,\infty}$ (see e.g. \cite{Da}).

Fix $\delta>0$ and for $j=1,\dots,n$, let $E_j$ be the sets defined as in \eqref{defEj}. Observe that, since the decomposability bundle of a measure $\nu$ which is absolutely continuous with respect to $\Leb^n$ coincides with $\R^n$, $\nu$-almost everywhere, we could have used $\mu_{sing}$ in place of $\mu$ in \eqref{defEj}. By \cite[Corollary 1.12]{DPR}, \cite[Corollary 6.5]{AM}, and the previous discussion, it holds $$\mu_{sing}(\R^n)=\sum_{j=1}^n\mu(E_j)>0,$$ and by Proposition \ref{cones}, for every $j$ there is a $C_j$-null compact set $K_j\subset E_j$ such that 
\begin{equation}\label{resto1}
\sum_{j=1}^n\mu_{sing}(E_j\setminus K_j)\leq\frac{\delta}{2(2n)^n}.
\end{equation}
For fixed $j$, let 
$$\mathcal{F}:=\{I_i:=I^j_{\varepsilon/N}(f^j_i)\}_{i\in\{1,\dots,N\}}$$ be the family of disjoint slabs given by Corollary \ref{corollary} applied to the compact set $K_j$ and the measure $\mu\trace K_j$, with $\varepsilon_0:=\delta/(2n)$. 
Denote by $A_j$ the open set 
$$A_j:=\bigcup_{i=1}^NI_{\varepsilon/N}^{j}(f^j_i)$$ and by $F_j$ a compact subset of $A_j\cap K_j$ such that 
\begin{equation}\label{resto2}
\sum_{j=1}^n\mu_{sing}(K_j\setminus F_j)\leq\frac{\delta}{2(2n)^n}.
\end{equation}
Denote by $\eta$ the positive quantity
$$\eta:=\min_j\{{\rm{dist}}(F_j,\R^n\setminus A_j)\}.$$
We denote by $f_j:\R^n\to\R$ the function 
$$f_j(z_1,\dots,z_n):=z_j-\Haus_1(\{x\in A_j:\hat x_j=\hat z_j, x_j\leq z_j\}).$$
We claim that $f_j$ has the following properties, for every $j$:
\begin{itemize}
\item [(i)] $0\leq z_j-f_j(z)\leq \delta/n$, for every $z\in\R^n$;
\item [(ii)] $f_j(z+te_j)=f_j(z)+t$, if the segment $[z,z+te_j]$ is contained in $A_j$;
\item [(iii)] $f_j$ is 2-Lipschitz.
\end{itemize}
Property (i) follows from the fact that the total width of $\mathcal{F}$ is at most $\delta/(2n)$. Property (ii) follows directly from the definition of $f_j$. To check property (iii), observe firstly that, by definition $|f_j(z+te_j)-f_j(z)|\leq|t|$, for every $z$ and for every $t$. 
To estimate the Lipschitz constant of $f_j$ along $e_j^\perp$, fix $w\in e_j^\perp$ and $z\in\R^n$. Assume without loss of generality that $f_j(z+w)\geq f_j(z)$. Hence 
$$\Haus_1(\{x\in A_j:\hat x_j=\hat z_j, x_j\leq z_j\})\geq\Haus_1(\{x\in A_j:\hat x_j=\hat z_j+tw, x_j\leq z_j\}).$$
Let $t$ be the smallest non-negative real number such that
\begin{equation}\label{lip_estimate}
\Haus_1(\{x\in A_j:\hat x_j=\hat z_j, x_j\leq z_j-t\})=\Haus_1(\{x\in A_j:\hat x_j=\hat z_j+tw, x_j\leq z_j\}).
\end{equation}

It holds $t\leq |w|$, because the slabs in $\mathcal{F}$ are disjoint and the corresponding functions are 1-Lipschitz.
By \eqref{lip_estimate} we have $f_j(z-te_j)=f(z+w)-t$. Since $f_j$ is 1-Lipschitz in the direction $e_j$, the previous estimate and the fact that $t\leq |w|$ is sufficient to prove that $f_j$ is 1-Lipschitz along $e_j^\perp$, which concludes the proof of (iii).

Let now $\phi$ be a radial mollifier with support on the ball $B(0,\eta)$ and consider the convolutions $g_j:=f_j*\phi$. Eventually, define $g_\delta:\R^n\to\R^n$, by
$$g_\delta(z):=(g_1(z),\dots,g_n(z)).$$
Observe that $g_\delta$ is smooth and it has the following properties:
\begin{itemize}
\item [(i)'] $|g_\delta-{\rm{Id}}|\leq \delta$;
\item [(ii)'] $\nabla g_\delta (e_j)=0$ on $F_j$;
\item [(iii)'] $g_\delta$ is $2n$-Lipschitz.
\end{itemize}
From the symmetry of $\phi$ with respect to the axis $\{x_j=0\}$ and from (i) it follows that ${0\leq z_j-g_j(z)\leq\delta/n}$, for every $z\in\R^n$ and for every $j=1,\dots,n$, which implies (i)'. Property (ii) and the definition of $\eta$ imply (ii)'. Property (iii)' follows from (iii).  

Combining (ii)', (iii)' and the estimates \eqref{resto1} and \eqref{resto2}, we get \eqref{jacobian}.

\end{proof}

\begin{proof}[Proof of Theorem \ref{metric_currents}]
We define a (signed) measure $\mu$ by
$$\mu(B):=T(\chi_B dx_1\wedge\dots\wedge dx_n), \quad\mbox{ for every $B\subset\R^n$ Borel}, $$
and we let $\mu\trace A +\mu\trace(\R^n\setminus A)$ be the Hahn decomposition of $\mu$. It is sufficient to prove that both positive measures $\mu\trace A$ and $-\mu\trace(\R^n\setminus A)$ are absolutely continuous.
Assume by contradiction that one of the two measures is not absolutely continuous (without loss of generality we assume that such measure is $\mu\trace A$) and let $g_h$ be the sequence obtained applying Theorem \ref{preiss} to $\mu\trace A$. Then by the continuity property \cite[Definition 3.1 (ii)]{AK} and by \cite[(3.2)]{AK} it holds,
$$\mu\trace A(\R^n)=T(\chi_A dx_1\wedge\dots\wedge dx_n)=\lim_{h\to\infty}T(\chi_A d(g_h)_1\wedge\dots\wedge d(g_h)_n)=$$
$$\lim_{h\to\infty}T(\chi_A{\rm{det}}(\nabla g_h) dx_1\wedge\dots\wedge dx_n)\leq \limsup_{h\to\infty}\int_{A}{\rm{det}}(\nabla g_h) d\mu <\mu\trace A(\R^n),$$
which is a contradiction.
\end{proof}
%
%
\bibliographystyle{plain}


%
%

\vskip .5 cm

{\parindent = 0 pt\begin{footnotesize}

A.M.
\\
Institut f\"ur Mathematik,
Mathematisch-naturwissenschaftliche Fakult\"at,
Universit\"at Z\"urich\\
Winterthurerstrasse 190,
CH-8057 Z\"urich,
Switzerland
\\
e-mail: {\tt andrea.marchese@math.uzh.ch}

\end{footnotesize}
}
\end{document}